\documentclass[12pt]{amsart}
\usepackage{graphicx}
\usepackage{amssymb}
\usepackage{cite}
\usepackage{amsmath}
\usepackage{latexsym}
\usepackage{amscd}
\usepackage{amsthm}
\usepackage{mathrsfs}
\usepackage{url}
\usepackage{hyperref}
\usepackage{verbatim}

\newtheorem{thm}{Theorem}[section]

\theoremstyle{definition}

\theoremstyle{remark}

\numberwithin{equation}{section}
\begin{document}

\title[Compactness of minimal surfaces]{Compactness for minimal surfaces
  with injectivity radius bounded from below}

\author{Luan Figueiredo}
\address{Universidade Federal de Minas Gerais (UFMG),
  Departamento de Matem\'{a}tica, Caixa Postal 702,
  30123-970, Belo Horizonte, MG, Brazil}
\email{luanmat@ufmg.br}

\author{Rosivaldo Gon\c{c}alves}
\address{Universidade Estadual de Montes Claros (Unimontes),
  Departamento de Matem\'{a}tica, 39401-089, Montes Claros, MG, Brazil}
\email{rosivaldo.goncalvees@unimontes.br}

\thanks{The authors were partially supported by CNPq and CAPES/Brazil grants.}
\subjclass[2010]{53C42, 53C21}
\keywords{Compactness, minimal surfaces, injectivity radius}
\date{\today}
\maketitle

\begin{abstract}
We prove a compactness theorem for the space of closed embedded minimal
surfaces with area bounded from above and injectivity radius bounded from
below in a closed Riemannian $3$-manifold. This result is a variant of the
Choi--Schoen compactness theorem in which the genus bound is replaced by a
lower bound on the injectivity radius of the surface.
\end{abstract}

\section{Introduction}

The study of sequences of minimal surfaces in $3$-manifolds is a central
topic in differential geometry. The foundational work of Allard~\cite{Al}
establishes weak convergence results for sequences of minimal surfaces via
the framework of varifolds. Among the most celebrated compactness results
for closed minimal surfaces is the Choi--Schoen Compactness
Theorem~\cite{Choi-1}, which asserts that any sequence of closed embedded
minimal surfaces in a closed Riemannian $3$-manifold with positive Ricci
curvature and uniformly bounded genus has a convergent subsequence in the
$C^\infty$ topology. White subsequently generalized this to stationary points
of arbitrary elliptic functionals on the space of embeddings of a compact
surface, under an additional area bound.

Both the genus bound and the area bound heuristically control the injectivity
radius of the surfaces. The following classical examples illustrate how
compactness can fail when the injectivity radius degenerates.

\subsection{Example: Catenoid}
Let $\Sigma_c$ denote the standard catenoid in $\mathbb{R}^3$ and set
$\Sigma_i = \frac{1}{i}\Sigma_c$. For every point $x \in \Sigma_i$ one has
\[
  |x^{\perp}|^2\,|A_{\Sigma_i}(x)|^2 \leq 2.
\]
In particular, $\Sigma_i$ pinches off at the origin and
$|A_{\Sigma_i}(x)| \to \infty$ for every $x \in \Sigma_i$ as $i\to\infty$.
The sequence $\{\Sigma_i\}$ converges weakly, as varifolds, to the plane
with multiplicity two. Here compactness fails because the neck of the catenoid
pinches off: the injectivity radius of $\Sigma_i$ tends to zero.

\subsection{Example: Helicoid}
The helicoid is the complete, simply connected minimal surface
$\Sigma_h \subset \mathbb{R}^3$ parametrized by
\[
  \varphi(u,v)= (u\cos v,\; u\sin v,\; v), \qquad (u,v)\in \mathbb{R}^2.
\]
Set $\Sigma_i = \frac{1}{i}\Sigma_h$. While $\Sigma_h$ is invariant under
vertical translations by $2\pi m$, the surface $\Sigma_i$ is invariant under
vertical translations by $\frac{2\pi m}{i}$. One can verify that $\{\Sigma_i\}$
converges smoothly away from the vertical axis to a foliation by horizontal
planes, while the curvature blows up along the vertical axis. Compactness
fails here due to the absence of local area bounds, again a consequence of
the injectivity radius tending to zero.

\medskip

These examples suggest that a lower bound on the injectivity radius of the
surfaces is the natural condition needed to recover compactness. Our main
result confirms this.

\begin{thm}\label{main}
Let $M^3$ be a closed Riemannian $3$-manifold and let $S$ denote the set of
closed embedded minimal surfaces in $M^3$. For constants $A_0, i_0 > 0$,
define
\[
  \mathcal{C}(A_0,i_0) := \bigl\{\Sigma \in S \;:\;
    \operatorname{Area}(\Sigma) \leq A_0 \text{ and }
    \operatorname{inj}(\Sigma) \geq i_0 \bigr\}.
\]
Then $\mathcal{C}(A_0,i_0)$ is compact in the $C^{\infty}$ topology.
More precisely, any sequence in $\mathcal{C}(A_0,i_0)$ has a subsequence
converging in $C^k$ on compact subsets to a surface in
$\mathcal{C}(A_0,i_0)$, for every $k \geq 2$.
\end{thm}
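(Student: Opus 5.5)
The plan is to obtain a uniform curvature bound $\sup_{\Sigma}|A_\Sigma|\le C(M,A_0,i_0)$ for every $\Sigma\in\mathcal{C}(A_0,i_0)$. With such a bound in hand, compactness follows by the standard argument.

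\textbf{Step 1: the curvature estimate, by blow-up and contradiction.} Suppose $\Sigma_j\in\mathcal{C}(A_0,i_0)$ and $\max|A_{\Sigma_j}|=\lambda_j\to\infty$, attained at $p_j$. Rescale the ambient metric by $\lambda_j$ about $p_j$. The ambient metric converges to the flat metric on $\mathbb{R}^3$, and the rescaled surfaces $\tilde\Sigma_j$ satisfy $|A|\le 1$ and $|A(0)|=1$. Their injectivity radius is at least $\lambda_j i_0\to\infty$.
\begin{itemize}
\item Monotonicity in $M$ gives local area bounds at scale comparable to the injectivity radius of $M$. After rescaling, these become uniform bounds $\operatorname{Area}(\tilde\Sigma_j\cap B_R)\le CR^2$ for $R\le c\lambda_j$.
\item The curvature bound writes $\tilde\Sigma_j$ locally as graphs of uniform size. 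Schauder estimates for the minimal surface equation control all higher derivatives.
\item The area bound limits the number of sheets in each ball.
\end{itemize}
A subsequence therefore converges smoothly, possibly with finite multiplicity, to a complete embedded minimal surface $\Sigma_\infty\subset\mathbb{R}^3$. This limit satisfies $|A|\le1$, $|A(0)|=1$, and has quadratic area growth.

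\textbf{Step 2: the limit is simply connected.} This is the step that must replace the genus bound, and I expect it to be the main obstacle. Every closed curve $\gamma\subset\Sigma_\infty$ is a smooth limit of curves $\gamma_j\subset\tilde\Sigma_j$ of bounded length. The rescaled injectivity radius tends to infinity, so for large $j$ each $\gamma_j$ lies in a geodesic ball of $\tilde\Sigma_j$ of radius smaller than the injectivity radius. That ball is a topological disk, so $\gamma_j$ bounds a disk $D_j$ contained in a fixed intrinsic ball of radius $R$. The curvature bound gives uniform intrinsic area bounds for such balls, so the disks $D_j$ converge and $\gamma$ is null-homotopic in $\Sigma_\infty$.

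If the convergence has multiplicity greater than one, I would work on a single sheet. Alternatively, I would note that the intrinsic geometry of each sheet converges, which is enough for the argument. This gives a complete, simply connected, embedded minimal surface in $\mathbb{R}^3$ with quadratic area growth.

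\textbf{Step 3: reach a contradiction.} By Meeks--Rosenberg, a simply connected complete embedded minimal surface in $\mathbb{R}^3$ is a plane or a helicoid. A helicoid has infinite area density at infinity, contradicting quadratic area growth, as in the Helicoid example. Hence $\Sigma_\infty$ is a plane, contradicting $|A(0)|=1$.

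A more elementary alternative uses the fact that every intrinsic ball of radius $R\ll \lambda_j i_0$ in $\tilde\Sigma_j$ is a disk. One could then aim to apply Colding--Minicozzi directly: embedded minimal disks with bounded area ratio have curvature estimates. This would also yield the bound.

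\textbf{Step 4: compactness.} With $|A|\le C$ and $\operatorname{Area}\le A_0$ uniform on $\mathcal{C}(A_0,i_0)$, each $\Sigma_j$ is covered by boundedly many graphical patches of fixed radius over tangent planes, with uniform $C^k$ bounds by elliptic regularity. Arzel\`a--Ascoli gives a subsequence converging in $C^k$ on compact sets to a smooth minimal surface $\Sigma$. Note that $\Sigma$ is compact, since it is a Hausdorff limit in compact $M$.
\begin{itemize}
\item \emph{Multiplicity.} If the convergence had multiplicity at least two, then nearby sheets of $\Sigma_j$ over $\Sigma$ would be joined somewhere. By connectedness of $\Sigma_j$ this produces short non-contractible loops or degenerating necks, which contradicts the injectivity radius bound. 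Thus the convergence has multiplicity one, and $\Sigma$ is embedded.
\item \emph{Membership in the class.} Area is continuous under smooth convergence, so $\operatorname{Area}(\Sigma)\le A_0$. The injectivity radius is upper semicontinuous-compatible: conjugate points and short geodesic loops persist under $C^2$ limits, so $\operatorname{inj}(\Sigma)\ge i_0$.
\end{itemize}
Hence $\Sigma\in\mathcal{C}(A_0,i_0)$, which proves Theorem~\ref{main}.
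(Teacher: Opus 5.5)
\paragraph{The curvature estimate (Steps~1--3).} These steps are correct and reach the estimate by a genuinely different route from the paper.

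The paper's route:
\begin{itemize}
\item It does not use monotonicity. It allows the blow-up to converge to a lamination leaf and cites Rosenberg for properness.
\item Its key lemma is a Gauss--Bonnet computation on the intrinsic disks $B^{\Sigma_n}_t(p_n)$, $t<i_0$. It integrates $\frac{d}{dt}|\partial B_t|=2\pi-\int_{B_t}K$ twice and uses the Gauss equation and $\operatorname{Area}\le A_0$. This gives the scale-invariant bound $\int_{B^{\Sigma_n}_{i_0/2}(p_n)}|A_n|^2\le C_1(i_0,A_0,M)$.
\item The blow-up limit is therefore an embedded disk of finite total curvature. L\'opez--Ros (in effect Osserman plus Jorge--Meeks) then makes it a plane.
\end{itemize}

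Your route: monotonicity gives uniform quadratic area growth. This removes the lamination alternative and gives properness for free. Your Step~2 justifies the simple connectivity that the paper only asserts. The price is Meeks--Rosenberg's uniqueness of the helicoid, a deep result resting on Colding--Minicozzi theory. The paper needs only an elementary integral identity and classical finite-total-curvature theory.

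\paragraph{The gap: multiplicity in Step~4.} Your mechanism is wrong.
\begin{itemize}
\item Once $|A|\le C$ and $\operatorname{Area}\le A_0$ hold uniformly, no necks can form. For large $j$ the nearest-point projection $\Sigma_j\to\Sigma$ is a smooth finite covering, and the injectivity radius plays no role in its degree.
\item If $\Sigma$ is two-sided, the sheets are globally ordered along the normal. Connectedness of $\Sigma_j$ alone then gives multiplicity one.
\item If $\Sigma$ is one-sided, two sheets can be joined through the monodromy of the orientation double cover, with no short loop anywhere. Multiplicity two then really occurs, and $\operatorname{inj}(\Sigma)\ge i_0$ can fail.
\end{itemize}

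\paragraph{Counterexample.} Let $M=\mathbb{R}^3/G$, where $G$ is generated by $(x,y,z)\mapsto(x+1,-y,-z)$, $(x,y,z)\mapsto(x,y+2,z)$ and $(x,y,z)\mapsto(x,y,z+2)$. This $M$ is a closed orientable flat manifold.
\begin{itemize}
\item For $0<t<1$, the image of $\{z=t\}$ is an embedded, totally geodesic flat torus $\mathbb{R}^2/(2\mathbb{Z})^2$. It has area $4$ and injectivity radius $1$.
\item As $t\to 0$, these tori converge with multiplicity two to the one-sided Klein bottle $\{z=0\}/G$.
\item That Klein bottle has area $2$ and contains the closed geodesic $\{y=0\}$ of length $1$, so its injectivity radius is $1/2$.
\end{itemize}
So the tori lie in $\mathcal{C}(4,1)$, but no subsequence converges to an element of $\mathcal{C}(4,1)$.

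\paragraph{What is actually true.} The paper's proof does not address this either; it ends with ``standard elliptic theory''. So this is a defect in the statement, not only in your write-up. The argument actually gives one of two outcomes:
\begin{itemize}
\item If the limit is two-sided, it lies in $\mathcal{C}(A_0,i_0)$ with multiplicity one. This is automatic when $H_1(M;\mathbb{Z}_2)=0$.
\item Otherwise, the convergence has multiplicity two onto a one-sided surface, and only its orientable double cover satisfies the area and injectivity-radius bounds.
\end{itemize}
Your semicontinuity argument for $\operatorname{inj}$ is fine once multiplicity one is known.
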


The proof uses crucially that the surfaces are two-dimensional, as the
argument relies on the Gauss--Bonnet theorem and on the classification result
of L\'opez--Ros~\cite{LR}.

In higher dimensions, Sharp~\cite{b-sharp} proved a smooth compactness result
for closed embedded orientable minimal hypersurfaces with bounded index and
volume in closed manifolds $M^{n+1}$ with positive Ricci curvature and
$2 \leq n \leq 6$, generalizing the Choi--Schoen theorem. It is natural to
ask whether Theorem~\ref{main} holds in higher dimensions as well. Another
natural generalization, to be addressed in forthcoming work, is to obtain
analogous compactness results for stationary points of general elliptic
functionals defined on embeddings of compact surfaces, with the area
functional replaced by an arbitrary such functional.

\section{Proof of Theorem~\ref{main}}

\begin{proof}
Let $\{\Sigma_n\}$ be a sequence of closed embedded minimal surfaces
satisfying
\[
  \operatorname{Area}(\Sigma_n) \leq A_0
  \quad \text{and} \quad
  \operatorname{inj}(\Sigma_n) \geq i_0.
\]
We claim there exists a constant $C > 0$ such that
$\sup_{\Sigma_n} |A_n| \leq C$ for all $n$,
where $A_n$ denotes the second fundamental form of $\Sigma_n$.

Suppose for contradiction that
$\lambda_n := \sup_{\Sigma_n} |A_n| \to \infty$.
Choose base points $p_n \in \Sigma_n$ such that $|A_n|(p_n) = \lambda_n$.
Consider the rescaled surfaces $\Sigma_n' := \lambda_n \Sigma_n$ inside the
geodesic ball $\bigl(B_{\lambda_n i_0/2}(p_n),\, \lambda_n^2 g\bigr)$.
By construction,
\[
  \sup_{\Sigma_n'} |A_{\Sigma_n'}| \leq 1
  \quad \text{and} \quad
  |A_{\Sigma_n'}|(p_n) = 1.
\]
As $n \to \infty$, the rescaled metrics $\lambda_n^2 g$ converge to the flat
metric on $\mathbb{R}^3$.

\smallskip\noindent\textbf{Convergence.}
If $\{\Sigma_n'\}$ satisfies local area bounds, i.e.,
$|\Sigma_n' \cap B_R(p_n)| \leq C_R$ for some constant $C_R$ depending only
on $R$, then standard compactness arguments yield a subsequence converging to
a properly embedded minimal surface $\Sigma_0 \subset \mathbb{R}^3$.
Otherwise, $\{\Sigma_n'\}$ converges to a minimal lamination of $\mathbb{R}^3$
containing a leaf $\Sigma_0$.
In either case, $\Sigma_0$ satisfies
\begin{equation}\label{non flat}
  \sup_{\Sigma_0} |A_{\Sigma_0}| = |A_{\Sigma_0}|(0) = 1,
\end{equation}
so $\Sigma_0$ is not totally geodesic.

Since $\operatorname{inj}(\Sigma_n) \geq i_0$ and $\lambda_n \to \infty$,
the injectivity radius of $\Sigma_0$ at the origin satisfies
\[
  \operatorname{inj}(\Sigma_0) = \lim_{n \to \infty} \lambda_n\,
  \operatorname{inj}(\Sigma_n) = \infty.
\]
In particular $\Sigma_0$ is topologically a disk.
Moreover, $\Sigma_0$ is properly embedded by a result of
Rosenberg~\cite{Rosenberg}.

\smallskip\noindent\textbf{Bounded total curvature.}
We now show that $\Sigma_0$ has finite total curvature.
Since $\operatorname{inj}(\Sigma_n) \geq i_0$, the intrinsic geodesic ball
$B_{i_0/2}^{\Sigma_n}(p_n)$ is topologically a disk.
Applying the Gauss--Bonnet theorem to this disk (see, e.g.,
\cite[Chapter~2]{CM2011}), for each $t \in (0, i_0)$ we have
\[
  \int_{\partial B_t^{\Sigma_n}} k_g \;=\; 2\pi - \int_{B_t^{\Sigma_n}} K_{\Sigma_n},
\]
where $k_g$ denotes the geodesic curvature of $\partial B_t^{\Sigma_n}$ in
$\Sigma_n$. Integrating over $t \in [0, \rho]$ gives
\begin{equation}\label{equation1}
  |\partial B_\rho^{\Sigma_n}| - 2\pi\rho
  = -\int_0^\rho \int_{B_t^{\Sigma_n}} K_{\Sigma_n}\, dt.
\end{equation}
Integrating~\eqref{equation1} over $\rho \in [0, i_0]$ and applying the
co-area formula yields
\begin{equation}\label{equation2}
  |B_{i_0}^{\Sigma_n}(p_n)| - \pi i_0^2
  = -\int_0^{i_0}\int_0^\rho \int_{B_t^{\Sigma_n}} K_{\Sigma_n}\, dt\, d\rho.
\end{equation}
Since $\Sigma_n$ is minimal, the Gauss equation reads
\[
  \overline{K}_M(T\Sigma_n) = K_{\Sigma_n} + \tfrac{1}{2}|A_n|^2,
\]
where $\overline{K}_M(T\Sigma_n)$ denotes the sectional curvature of $M$
along $\Sigma_n$. Substituting into~\eqref{equation2},
\begin{align*}
  |B_{i_0}^{\Sigma_n}(p_n)| - \pi i_0^2
  &= -\int_0^{i_0}\int_0^\rho \int_{B_t^{\Sigma_n}}
    \overline{K}_M(T\Sigma_n)\,dt\,d\rho \\
  &\quad + \frac{1}{2}\int_0^{i_0}\int_0^\rho
    \int_{B_t^{\Sigma_n}} |A_n|^2\,dt\,d\rho \\
  &\geq -\int_0^{i_0}\int_0^\rho \int_{B_t^{\Sigma_n}}
    \overline{K}_M(T\Sigma_n)\,dt\,d\rho
    + \frac{i_0^2}{8}\int_{B_{i_0/2}^{\Sigma_n}(p_n)} |A_n|^2,
\end{align*}
where the last inequality uses the co-area formula together with the fact
that $B_{i_0/2}^{\Sigma_n}(p_n)\subset B_t^{\Sigma_n}$ for all
$t \geq i_0/2$.
Since $M^3$ is compact, there exists $K_0 > 0$ with
$|\overline{K}_M| \leq K_0$, and since $\operatorname{Area}(\Sigma_n) \leq A_0$,
we obtain
\[
  \frac{i_0^2}{8}\int_{B_{i_0/2}^{\Sigma_n}(p_n)} |A_n|^2
  \;\leq\; A_0 - \pi i_0^2 + K_0\,\frac{i_0^2}{2}\,A_0.
\]
Hence there exists a constant $C_1 = C_1(i_0, A_0, K_0) > 0$ such that
\begin{equation}\label{bounded total curvature}
  \int_{B_{i_0/2}^{\Sigma_n}(p_n)} |A_n|^2 \;\leq\; C_1.
\end{equation}
Since the left-hand side of~\eqref{bounded total curvature} is scale
invariant, passing to the limit gives
\[
  \int_{\Sigma_0} |A_{\Sigma_0}|^2 < \infty.
\]

\smallskip\noindent\textbf{Conclusion.}
By the theorem of L\'opez--Ros~\cite{LR}, the only properly embedded minimal
surface in $\mathbb{R}^3$ of genus zero with finite total curvature is the
totally geodesic plane. Since $\Sigma_0$ is a disk (hence genus zero) and
has finite total curvature, it must be a plane---contradicting~\eqref{non flat}.

Therefore $\sup_{\Sigma_n} |A_n| \leq C$ for some uniform constant $C > 0$,
and standard elliptic theory implies that every sequence in
$\mathcal{C}(A_0, i_0)$ has a subsequence converging smoothly to a surface
in $\mathcal{C}(A_0, i_0)$.
\end{proof}


\bibliographystyle{plain}

\end{document}